\newtheorem{theorem}{Theorem}
\newtheorem{lemma}[theorem]{Lemma}
\newenvironment{proof}{\noindent{\bf Proof.}}{\hspace*{2mm}~$\square$}
\newcommand{\Z}{\mathbb{Z}}
\newcommand{\R}{\mathbb{R}}
\newcommand{\norm}[1]{|\!|#1|\!|}
\newcommand{\ind}{\mathbf{1}}
\newcommand{\ep}{\epsilon}
\newcommand{\n}{\hspace*{-5pt}}
\DeclareMathOperator{\card}{card}
\begin{document}

\begin{frontmatter}
\title     {Deffuant opinion dynamics with \\ attraction and repulsion}
\runtitle  {Deffuant opinion dynamics with attraction and repulsion}
\author    {Nicolas Lanchier\thanks{Nicolas Lanchier was partially supported by NSF grant CNS-2000792.} and Max Mercer}
\runauthor {Nicolas Lanchier and Max Mercer}
\address   {School of Mathematical and Statistical Sciences \\ Arizona State University \\ Tempe, AZ 85287, USA. \\ nicolas.lanchier@asu.edu \\ mamerce1@asu.edu}

\maketitle

\begin{abstract} \ \
 In the Deffuant model, individuals are located on the vertices of a graph, and are characterized by their opinion, a number in $[-1, 1]$. The dynamics depends on two parameters: a confidence threshold $\theta < 2$ and a convergent parameter $\mu_- \leq 1/2$. Neighbors on the graph interact at rate one, which results in no changes if the neighbors disagree by more than $\theta$, and a compromise with the opinions moving toward each other by a factor $\mu_-$ if they disagree by less than $\theta$ (attraction). The main conjecture about the Deffuant model, which was proved for the process on the integers, states that, for all $\mu_- > 0$ and starting from the product measure in which the opinions are uniformly distributed in the interval $[-1, 1]$, there is a phase transition from discordance to consensus at the confidence threshold one. In this paper, we study a natural variant of the model in which neighbors who disagree by more than $\theta$ feel more strongly about their own opinion, which is modeled by assuming that the opinions move away from each other by a divergent parameter $\mu_+$ (repulsion). We prove, for the process on the integers, the absence of a phase transition even for arbitrarily small $\mu_+ > 0$, in the sense that, for every nontrivial choice of $\theta$, there is always discordance.
\end{abstract}

\begin{keyword}[class=AMS]
\kwd[Primary ]{60K35 }
\kwd[Secondary ]{91D25, 91D30}
\end{keyword}

\begin{keyword}
\kwd{Interacting particle systems, opinion dynamics, Deffuant model, attraction, repulsion.}
\end{keyword}

\end{frontmatter}


\section{Introduction}
 Homophily and social influence have been identified by sociologists as the two main mechanisms that govern the dynamics of opinions.
 Homophily is defined as the tendency of individuals to interact more when they are similar, while social influence is defined as the tendency of individuals to become more similar when they interact.
 In short, more similarities leads to more interactions, which leads to more similarities, and so on.
 One of the main questions in the context of opinion dynamics is whether these two mechanisms combined can lead an entire population to a global consensus or disagreements persist in the long run. \\
\indent
 The first opinion model based on interacting particle systems is the voter model, which was introduced independently by Holley and Liggett~\cite{holley_liggett}, and Clifford and Sudbury~\cite{clifford_sudbury}.
 In the voter model, individuals characterized by two possible competing opinions are located on the vertices of a graph interpreted as a social network, and update their opinion at rate one by mimicking one of their neighbors chosen uniformly at random.
 In particular, the process includes social influence but not homophily.
 The main result about the voter model shows that, when the underlying graph is the~$d$-dimensional integer lattice, clusters of the same opinion get larger and larger in dimensions one and two~(consensus), whereas the system converges to an invariant measure in which both opinions coexist in higher dimensions~(discordance).
 This dichotomy is due to the duality relationship between the voter model and coalescing random walks. \\
\indent
 The first model of interacting particle systems that includes both homophily and social influence is the popular Axelrod model~\cite{axelrod}.
 In this process, individuals are characterized by a vector of opinions about different issues, and homophily is modeled by assuming that neighbors on the graph interact at a rate proportional to the number of opinions they have in common.
 As a result of an interaction, one of the two neighbors chosen at random updates its opinion about one of the issues to match the opinion of the other neighbor on that issue, which models social influence.
 The Deffuant model~\cite{deffuant} is another popular model of opinion dynamics that includes both homophily and social influence.
 Homophily takes the form of a confidence threshold, and social influence takes the form of a mutual compromise.
 More precisely, the set of opinions now consists of a compact interval, say~$[-1, 1]$, so the state at time~$t$ is a configuration
 $$ \xi_t : V \longrightarrow [-1, 1], \quad \hbox{where} \quad  \xi_t (x) = \hbox{opinion at site~$x$ at time~$t$}, $$
 and where~$V$ refers to the vertex set of the underlying graph.
 The model depends on two parameters: a confidence threshold~$\theta$ measuring the strength of homophily, and a convergent parameter~$\mu$ measuring the strength of social influence.
 Declaring two individuals to be compatible if the distance between their opinions does not exceed the confidence threshold~$\theta$, neighbors on the graph interact at rate one, which results in their opinions moving toward each other by the factor~$\mu$~(social influence) if the two neighbors are compatible~(homophily), whereas nothing happens if the two neighbors are not compatible.
 More precisely, an interaction at time~$t$ between two neighbors~$x$ and~$y$ with respective opinions~$a$ and~$b$ at time~$t-$ results in the pair of simultaneous jumps
  $$ \begin{array}{rcl}
    \xi_{t-} (x) = a & \n \to \n & \xi_t (x) = (a + \mu (b - a)) \ \ind \{|a - b| \leq \theta \},  \vspace*{4pt} \\
    \xi_{t-} (y) = b & \n \to \n & \xi_t (y) = (b + \mu (a - b)) \ \ind \{|a - b| \leq \theta \}. \end{array} $$
 Based on numerical simulations of the process on various graphs, Deffuant et al.~\cite{deffuant} conjectured the presence of a phase transition at a critical threshold:
 starting from the configuration in which the opinions are independent, uniformly distributed in the interval~$[-1, 1]$, the system converges to a consensus if~$\theta > 1$, whereas disagreements persist if~$\theta < 1$, in the sense that
 $$ \begin{array}{rcll}
      P (\lim_{t \to \infty} |\xi_t (x) - \xi_t (y)| = 0) = 1 & \hbox{when} & \theta > 1 & (\hbox{consensus}), \vspace*{4pt} \\
      P (\liminf_{t \to \infty} |\xi_t (x) - \xi_t (y)| > \theta) > 0 & \hbox{when} & \theta < 1 & (\hbox{discordance}), \end{array} $$
 for all neighbors~$x$ and~$y$.
 Though the parameter~$\mu$ affects the rate of convergence of the process, simulations also suggest that it does not affect the limiting behavior. \\
\indent
 The existence of a phase transition from discordance to consensus at the critical threshold one was first proved analytically by Lanchier~\cite{lanchier2012} for the process on the integers in the presence of nearest neighbor interactions.
 Shortly after, a simpler and more transparent alternative proof of this result was proposed by H\"aggstr\"om~\cite{Haggstrom180}.
 Following the works of Lanchier and H\"aggstr\"om, a number of variants of the Deffuant model with a more general initial distribution and/or opinion space and/or social network were also studied analytically. 
 H\"aggstr\"om and Hirscher~\cite{HagHir181} studied the process on the integers starting from the configuration in which the opinions are independent and identically distributed, but not necessarily uniform across the opinion space.
 Let~$[a, b]$ be the smallest compact interval that contains the support of the initial opinion distribution, and let~$L$ be the length of the gap~$E (\xi_0 (x))$ belongs to~(with the convention~$L = 0$ if the expected value belongs to the support).
 H\"aggstr\"om and Hirscher proved that the phase transition from discordance to consensus now occurs at the critical threshold
 $$ \theta_c = \max (E(\xi_0(x)) - a, b - E(\xi_0(x)), L). $$
 Along these lines, they also proved that, if the support is not bounded, the phase transition disappears: there is discordance for all confidence thresholds.
 Hirscher~\cite{hirscher2014deffuant} generalized these results to the multivariate case where the opinions are vectors in the space~$\R^n$, and where the disagreements are measured using the Euclidean distance.
 To state his results, let
 $$ r_1 = \inf \{r > 0 : P (\xi_0 (x) \in B (E (\xi_0 (x)), r)) = 1 \} $$
 be the radius of the smallest ball centered at~$E (\xi_0 (x))$ that contains the support.
 Like in the univariate case, at least when the distribution of the initial opinions has mass around its mean, there is a phase transition from discordance to consensus at the critical threshold~$\theta_c = r_1$ when the support is bounded, whereas there is always discordance when the support is unbounded.
 Hirscher~\cite{hirscher2016overly} also studied the process on the integers where the opinions are probability distributions which are absolutely continuous with respect to the Lebesgue measure, thus characterized by some density function with support in~$[-1, 1]$.
 In this case, the disagreement between individuals is measured using the total variation distance, which is bounded by one.
 Specializing in the symmetric triangular distribution, with the support at time zero being chosen by selecting two independent uniform random variables in~$[-1, 1]$ for each integer~$x \in \Z$, Hirscher proved the absence of a phase transition, with always discordance when~$\theta < 1$.
 He also proved, however, the presence of a phase transition when imposing that the length of the support be larger than a fixed positive constant.
 Gantert, Haydenreich and Hirscher~\cite{gantert2019strictly} studied another variant of the Deffuant model on the integers in which the opinion space is path-connected but not simply connected.
 In this case, the disagreements between opinions are measured using the length of the geodesic connecting them, while interactions move the opinions along the geodesic.
 Specializing in the case where the opinion space consists of the unit circle, and ignoring homophily by setting~$\theta = \infty$, they proved that weak consensus~(neighbors share asymptotically the same opinion) and strong consensus~(all the individuals share asymptotically the same opinion) are not equivalent.
 More precisely, letting~$d_G$ denote the geodesic distance on the unit circle, and assuming that the initial opinions are independent and uniformly distributed, there is weak consensus~(in mean), in the sense that
 $$ \begin{array}{l} \lim_{t \to \infty} E (d_G (\xi_t (x), \xi_t (x + 1))) = 0 \quad \hbox{for all} \quad x \in \Z, \end{array} $$
 but not strong consensus~(in probability), in the sense that there exists~$\ep > 0$ such that, for all possible opinions~$c$ on the unit circle,
 $$ \begin{array}{l} \limsup_{t \to \infty} P (d_G (\xi_t(x), c) > \ep) > 0 \quad \hbox{for some} \quad x \in \Z. \end{array} $$
 The previous results rely, among other techniques, on a notion introduced by Lanchier~\cite{lanchier2012} and named~$\ep$-flatness by H\"aggstr\"om~\cite{Haggstrom180}, that can be defined as follows:
 an integer~$x$ is said to be~$\ep$-flat if the opinion distance between its initial opinion and the initial opinion of any other integer~$y$ does not exceed~$\ep |x - y|$.
 Keeping track of the integers that are~$\ep$-flat is the key to studying the process on the integers but this approach fails when dealing with other networks of interactions.
 However, there have been a few attempts at studying the process on more general graphs/social networks.
 H\"aggstr\"om and Hirscher~\cite{HagHir181} looked at the original Deffuant model on the~$d$-dimensional lattice, and used the concept of energy to prove consensus for all~$\theta > 3/2$.
 Lanchier and Li~\cite{lanchier_li} also studied a multivariate version of the model in which the opinion space is a bounded convex subset~$\Delta$ of a normed vector space, and the network of interactions a general finite connected graph.
 Due to the finiteness of the graph, there is no phase transition like on the integers, and the main objective is to estimate the probability of consensus.
 Let~$\norm{\cdot}$ be the norm on the opinion space, and
 $$ r_2 = \inf \{r > 0 : \Delta \subset B (c, r) \ \hbox{for some} \ c \in \Delta \}, $$
 which we interpret as the radius of the opinion space, and fix~$c_0$ such that~$\Delta \subset B (c_0, r_2)$.
 Using techniques from martingale theory, Lanchier and Li proved that
 $$ P (\hbox{consensus}) \geq 1 - E \norm{\xi_0 (x) - c_0)} / (\theta - r_2) \quad \hbox{for all} \quad \theta > r_2, $$
 when starting from the configuration in which the opinions are independent, and uniformly distributed in the opinion space~$\Delta$.
 For a detailed review of the Deffuant model and closely related opinion models, we refer the reader to~\cite[Section~6.4]{IPS}.


\section{Model description and main results}
 The main objective of this paper is to introduce and study another natural variant of the original Deffuant model~\cite{deffuant}.
 Following~\cite{Haggstrom180,lanchier2012}, we assume that the individuals are located on the integers, and interact with each of their two nearest neighbors at rate one.
 The initial opinions are again independent and uniformly distributed in the interval~$[-1, 1]$.
 Like in the original Deffuant model, interactions between compatible neighbors~(whose opinion distance does not exceed the confidence threshold~$\theta < 2$) result in the neighbors' opinions moving toward each other by some fixed convergent parameter~$\mu_- \leq 1/2$, which we refer to as attraction.
 In addition, we now assume that interactions between incompatible neighbors~(whose opinion distance exceeds the confidence threshold~$\theta$) also affect the configuration of the system but now result in the neighbors' opinions moving away from each other by some fixed divergent parameter~$\mu_+ \leq 1/2$, which we refer to as repulsion.
 In short, through an interaction, individuals who somewhat agree become even more similar, whereas individuals who somewhat disagree become even more dissimilar:
 an interaction results in an escalated fight that makes the two opponents even more extremist in their position.
 Due to the presence of repulsion, even when starting with all the opinions in~$[-1, 1]$, the opinions can move away from this interval, so the state at time~$t$ is a configuration
 $$ \xi_t : \Z \longrightarrow \R, \quad \hbox{where} \quad  \xi_t (x) = \hbox{opinion at site~$x$ at time~$t$}. $$
 In addition, an interaction at time~$t$ between two neighbors~$x$ and~$y$ with respective opinions~$a$ and~$b$ at time~$t-$ now results in the pair of simultaneous jumps
 $$ \begin{array}{rcl}
    \xi_{t-} (x) = a & \n \to \n & \xi_t (x) = (a + \mu_- (b - a)) \ \ind \{|a - b| \leq \theta \} + (a - \mu_+ (b - a)) \ \ind \{|a - b| > \theta \}, \vspace*{4pt} \\
    \xi_{t-} (y) = b & \n \to \n & \xi_t (y) = (b + \mu_- (a - b)) \ \ind \{|a - b| \leq \theta \} + (b - \mu_+ (a - b)) \ \ind \{|a - b| > \theta \}. \end{array} $$
 Note that the original Deffuant model~\cite{deffuant} reduces to the particular case where the convergent parameter~$\mu_- = \mu > 0$, and where the divergent parameter~$\mu_+ = 0$.
 As previously, the main objective is to determine whether or not the process starting from the product measure in which the opinions are uniformly distributed in~$[-1, 1]$ exhibits a phase transition from discordance to weak/strong consensus at some nondegenerate critical threshold~$\theta_c \in (0, 2)$. \\
\indent
 To get some insight on the possible behavior of the spatial stochastic process, we first look at its mean-field approximation, the nonspatial deterministic model obtained by assuming that sites are independent and the system spatially homogeneous.
 In the case of the Deffuant model, because the opinion space is uncountable, the mean-field model consists of an integro-differential equation for the density~$u (a, t)$ of individuals with opinion~$a$ at time~$t$ rather than a system of ordinary differential equations.
 More precisely, assuming for simplicity that~$\mu_- = 1/2$,
\begin{equation}
\label{eq:mean-field}
\begin{array}{rcl}
\displaystyle \frac{\partial u (a, t)}{\partial t} & \n = \n &
\displaystyle \int_{[- \theta / 2, \theta / 2]} u (a + b, t) \,u (a - b, t) \,db \vspace*{8pt} \\ & \n + \n &
\displaystyle \int_{\R \setminus [- \theta \mu_+, \theta \mu_+]} u (a + b, t) \,u (a + b + b / \mu_+, t) \,db - u (a, t). \end{array}
\end{equation}
 The first integral comes from the fact that an interaction~(attraction) between two compatible individuals with opinions~$a - b$ and~$a + b$, with~$|b| \leq \theta / 2$, results in both individuals having new opinion~$a$.
 Similarly, the second integral comes from the fact that an interaction~(repulsion) between two incompatible individuals with opinions~$a + b$ and~$a + b + b / \mu_+$, with~$|b| > \theta \mu_+$, results in the first individual changing its opinion to
 $$ (a + b) - \mu_+ ((a + b + b / \mu_+) - (a + b)) = a + b - \mu_+ b / \mu_+ = a. $$
 Finally, the last term~$- u (a, t)$ simply comes from the fact that individuals with opinion~$a$ change their opinion when they interact.
 Assume that there exist~$c_0 \geq \theta / 2$ and~$\ep > 0$ such that
\begin{equation}
\label{eq:base-case}
  u (c, t) \geq \ep \quad \hbox{for all} \quad c \in I_{\ep} = [- c_0 - 2 \ep, - c_0] \cup [c_0, c_0 + 2 \ep],
\end{equation}
 and let~$c_+ = (2 \mu_+ + 1)(c_0 + \ep)$.
 Then, for all
 $$ b \in J_{\ep} = [- 2 \mu_+ (c_0 + \ep) - \ep / (1 + 1 / \mu_+), - 2 \mu_+ (c_0 + \ep) + \ep / (1 + 1 / \mu_+)], $$
 some basic algebra shows that~$c_+ + b \in I_{\ep}$ and~$c_+ + b + b / \mu_+ \in I_{\ep}$.
 In particular, it follows from the mean-field dynamics~\eqref{eq:mean-field} and assumption~\eqref{eq:base-case} that, whenever~$u (c_+, t) = 0$,
 $$ \begin{array}{rcl}
    \displaystyle \frac{\partial u (c_+, t)}{\partial t} & \n \geq \n &
    \displaystyle \int_{\R \setminus [- \theta \mu_+, \theta \mu_+]} u (c_+ + b, t) \,u (c_+ + b + b / \mu_+, t) \,db \vspace*{8pt} \\ & \n \geq \n &
    \displaystyle \int_{J_{\ep}} u (c_+ + b, t) \,u (c_+ + b + b / \mu_+, t) \,db \geq \int_{J_{\ep}} \ep^2 \,db \geq \frac{2 \ep^3}{1 + 1 / \mu_+} > 0, \end{array} $$
 showing that the dynamics produces a positive density around opinion~$c_+$.
 By symmetry, the same holds around opinion~$c_- = - (2 \mu_+ + 1)(c_0 + \ep)$.
 Now, starting from the configuration in which the opinions are independent and uniformly distributed in the interval~$[-1, 1]$, we have the initial profile~$u (a, 0) = 1/2$ for all~$a \in [-1, 1]$, therefore, for all~$\theta < 2$,
 $$ u (c, 0) \geq 1/2 - \theta / 4 \quad \hbox{for all} \quad c \in [- 1, - \theta / 2] \cup [\theta / 2, 1], $$
 which shows that~\eqref{eq:base-case} holds at time zero for~$c_0 = \theta / 2$ and~$\ep = 1/2 - \theta / 4$.
 Using a simple induction, we deduce that, for all~$c > 0$, there is a positive density of individuals with opinion larger than~$c$ and a positive density of individuals with opinion smaller than~$-c$ at large enough times. \\
\indent
 The main objective of this paper is to prove that the interacting particle systems exhibits the same behavior:
 absence of a phase transition, with discordance for all confidence thresholds~$\theta < 2$ even for small perturbations of the Deffuant model with~$\mu_+ > 0$ close to zero, and the existence of opinions drifting arbitrarily far from each other.
 More precisely, we prove that
 \begin{theorem}
\label{th:divergence}
 For all~$\theta < 2$ and all~$\mu_+ > 0$,
 $$ \begin{array}{l} \card \,\{x \in \Z : \lim_{t \to \infty} |\xi_t (x + 1) - \xi_t (x)| = \infty \} \geq 1. \end{array} $$
\end{theorem}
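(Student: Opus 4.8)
The plan is to realize the process through the standard graphical representation, attaching to each edge of~$\Z$ an independent rate-one Poisson process of interaction times, so that~$(\xi_t)_{t \geq 0}$ is a deterministic functional of the initial configuration and of these clocks. Since the initial opinions are i.i.d.\ and the clocks are i.i.d.\ across edges, this pair is stationary and mixing, hence ergodic, under the spatial shift. The event that some edge difference tends to infinity is shift invariant, so by the ergodic zero--one law it has probability~$0$ or~$1$. Consequently it suffices to prove that one fixed edge, say~$(0,1)$, satisfies $\lim_{t \to \infty} |\xi_t(1) - \xi_t(0)| = \infty$ with positive probability; the theorem, read as an almost-sure statement, then follows, and the positive-probability version holds a fortiori.

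The heart of the argument is to exhibit a positive-probability \emph{absorbing} event after which the edge~$(0,1)$ is doomed to diverge regardless of how the rest of the line behaves. The key structural fact is that a repulsion preserves the order of the two interacting neighbors and multiplies their gap by $1 + 2 \mu_+ > 1$. Therefore a configuration in which the opinions on a block alternate in sign with every nearest-neighbor gap exceeding~$\theta$ is self-reinforcing: each interaction inside the block is a repulsion that drives the two endpoints further apart and leaves every adjacent gap even larger. I would formalize this as an invariant, proved by induction over the interaction times: if at some time every internal edge of the block $\{-1,0,1,2\}$ has a gap exceeding~$\theta$ with alternating signs, then so does it at all later times, because internal repulsions only enlarge these gaps, while a boundary repulsion merely pushes an endpoint further outward and captures the exterior neighbor into the opposite extreme. \textbf{Showing that this invariant is genuinely immune to the uncontrolled exterior dynamics is the main obstacle}, and it is precisely where the sign structure does the work: since the sites inside and just outside the block are driven to opposite extremes, they remain incompatible forever, so no compromise can ever leak in to collapse a protected gap.

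To launch the invariant I would build a positive-probability event in a finite space-time window. First, on the block $\{-1,0,1,2\}$ I ask that the initial opinions lie within a small~$\delta$ of the alternating targets $(-1,+1,-1,+1)$, which forces every internal gap to exceed~$\theta$ at time zero because~$\theta < 2$. Second, during a fixed interval~$[0,\tau]$ I impose that the four buffer edges flanking the block stay silent, an event of probability~$e^{-4\tau} > 0$ that decouples the block from the exterior and pins the two exterior neighbors at their initial values in~$[-1,1]$. Third, I ask that each of the three internal edges rings at least~$k$ times before~$\tau$; since the isolated block then undergoes only repulsions, its opinions grow geometrically, and for~$k$ large enough all four exceed~$1 + \theta$ in absolute value by time~$\tau$. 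These requirements involve disjoint sets of edges and a product initial law, so their intersection has positive probability. At time~$\tau$ the block is extreme and alternating while the exterior neighbors still lie in~$[-1,1]$, so every boundary gap also exceeds~$\theta$; the invariant of the previous paragraph is thus in force and persists for all~$t > \tau$.

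Finally I would read off the divergence of the edge~$(0,1)$ from the invariant. Writing $d_t = \xi_t(1) - \xi_t(0)$ and using that, after~$\tau$, the sites $-1,0,1,2$ keep alternating signs with gaps above~$\theta$, one checks that every interaction on the three edges $(-1,0)$, $(0,1)$, $(1,2)$ is a repulsion that pushes $\xi_t(0)$ and $\xi_t(1)$ apart, whereas interactions elsewhere do not touch these two sites; hence $|d_t|$ is nondecreasing for~$t > \tau$. In particular each ring of the edge~$(0,1)$ multiplies $|d_t|$ by~$1 + 2 \mu_+$, and since this edge rings infinitely often almost surely, $|d_t| \to \infty$. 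This yields $\lim_{t \to \infty} |\xi_t(1) - \xi_t(0)| = \infty$ on the constructed event, establishing the positive-probability statement and, through the zero--one law, the theorem.
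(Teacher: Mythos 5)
Your reduction via the graphical representation and the ergodic zero--one law is sound, and your finite space--time construction up to time~$\tau$ (silent buffer edges, repeated rings of the three internal edges) is also fine. The fatal problem is the claimed absorbing invariant. It is simply not true that, once the block $\{-1,0,1,2\}$ is extreme and alternating and the exterior neighbors lie in $[-1,1]$, the block is ``immune to the uncontrolled exterior dynamics.'' Incompatibility is not a permanent state in this model: the exterior site~$3$ is itself subject to interactions along the edge $(3,4)$, and a repulsion there displaces $\xi_t(3)$ by $\mu_+ |\xi_t(3)-\xi_t(4)|$, an amount that is \emph{unbounded} because $\xi_t(4)$ is uncontrolled (distant sites can acquire arbitrarily large opinions --- that is precisely the phenomenon the theorem is about). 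A single such kick can carry $\xi_t(3)$ above $\xi_t(2)+\theta$; the next ring of edge $(2,3)$ is then a repulsion that pushes $\xi_t(2)$ \emph{downward} by $\mu_+(\xi_t(3)-\xi_t(2))$, possibly past $\xi_t(1)$, destroying both the gap condition and the sign alternation; alternatively $\xi_t(3)$ can land within $\theta$ of $\xi_t(2)$, and repeated attractions then drag $\xi_t(2)$ down by $\mu_-\theta$ per ring. So ``no compromise (or kick) can ever leak in'' fails, the corruption can propagate inward edge by edge, and your argument contains no probabilistic control of these events over an infinite time horizon --- they are not rare in any uniform sense, so they cannot be absorbed into the positive-probability bookkeeping.

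This is exactly the difficulty the paper's proof is built around, and why it does not try to protect a fixed edge. Instead it tracks a \emph{moving} edge $\zeta_t(e)$ carrying the locally maximal gap, and shows (Lemma~\ref{lem:control-gap}, via Lemma~\ref{lem:align}) that an interaction on a neighboring edge can shrink the maximal gap among the trio $\zeta_t(e)-1,\zeta_t(e),\zeta_t(e)+1$ by at most the factor $\rho_-=(1+2\mu_+)/(1+3\mu_+)$: in the bad scenario above, the large gap does not survive at the same edge, but it reappears at the neighboring edge. Since direct interactions multiply the tracked gap by $\rho_+=1+2\mu_+$ at rate one while neighboring interactions contract it by at most $\rho_-$ at rate two, and $\rho_-^2\rho_+>1$, a supermartingale and optional stopping argument (Lemmas~\ref{lem:supermartingale} and~\ref{lem:OST}) yields divergence of the tracked gap with positive probability, after which an ergodicity step like yours finishes the proof. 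To salvage your architecture you would have to either let the protected edge move, as the paper does, or prove a genuine quantitative bound on the inward propagation of exterior disturbances; the deterministic invariant you postulate does not exist.
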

 Our strategy to prove the theorem is to construct an infinite collection of processes that keep track of edges along which the opinion distance diverges to infinity.
 Because these processes might intersect, in which case they coalesce, our proof only guarantees the existence of at least one edge along which the opinion distance diverges to infinity, as time goes to infinity.
 We conjecture, however, that not only the density of such edges is positive, thus making the cardinal in the theorem infinite, but also that the opinion distance along \emph{all} the edges diverges to infinity.


\section{Proof of Theorem~\ref{th:divergence}}
 To study the opinion model, it is convenient to keep track of the disagreement along the edges~(which we also call the gap along the edges for short) rather than the opinion at the vertices.
 More precisely, identifying each edge~$e = (x, x + 1)$ with its midpoint~$x + 1/2$, we let
 $$ \bar \xi_t (e) = |\xi_t (e + 1/2) - \xi_t (e - 1/2)| \quad \hbox{for all} \quad e \in \Z + 1/2 $$
 be the gap along~$e$ at time~$t$.
 To state our results, we also define
 $$ \rho_- = \frac{1 + 2 \mu_+}{1 + 3 \mu_+} < 1, \qquad \rho_+ = 1 + 2 \mu_+ > 1, \qquad D = \frac{\mu_- \theta}{1 - \rho_-} = \bigg(3 + \frac{1}{\mu_+} \bigg) \mu_- \theta. $$
 The proof of Theorem~\ref{th:divergence} relies on the analysis of a system of processes that keep track of a large gap.
 For each edge~$e$, we consider a collection of processes~$\zeta_t (e)$ starting at~$\zeta_0 (e) = e$.
 To describe their dynamics, assume that~$\zeta_s (e)$ has been defined until time~$t-$ and that
\begin{equation}
\label{eq:interaction}
\zeta_{t-} (e) = e' \quad \hbox{and} \quad \hbox{there is an interaction along~$e' - 1$, $e'$ or~$e' + 1$ at time~$t$}.
\end{equation}
 Then, the process jumps at time~$t$ to the edge with the largest gap:
 $$ \zeta_t (e) = \hbox{unique~$e'' \in \Z + 1/2$ such that} \ \bar \xi_t (e'') = \max (\bar \xi_t (e' - 1), \bar \xi_t (e'), \bar \xi_t (e' + 1)). $$
 To prove the theorem, the first step is to show that we can, with positive probability, keep the process~$\zeta_t (e)$ at edge~$e$ for at least one unit of time while increasing the gap along edge~$e$ up to the point that it exceeds~$2D$ at time one~(see Lemma~\ref{lem:increase-gap}).
 In particular, $\zeta_1 (e) = e$ and
\begin{equation}
\label{eq:timeone}
\bar \xi_1 (\zeta_1 (e)) > 2D \quad \hbox{with positive probability}.
\end{equation}
 The next step consists in showing that, whenever~\eqref{eq:interaction} holds and the gap along~$e'$ just before time~$t$ exceeds~$D$, one of the following two cases occurs:
\begin{itemize}
\item The interaction at time~$t$ is along edge~$e' - 1$ or along edge~$e' + 1$, which occurs at rate two, in which case the largest gap along the trio of edges~$e' - 1, e'$ and~$e' + 1$ after the interaction is at least~$\rho_- \bar \xi_{t-} (e')$~(see Lemma~\ref{lem:control-gap}). \vspace*{4pt}
\item The interaction at time~$t$ is along edge~$e'$, which occurs at rate one, in which case the new gap along edge~$e'$ increases by the factor~$\rho_+$.
\end{itemize}
 This shows that, as long as the gap along~$\zeta_t (e)$ exceeds~$D$, it dominates stochastically the process~$X_t$ with state space~$\R_+$ and transition rates
\begin{equation}
\label{eq:submartingale}
  X_t \to \rho_- X_t \quad \hbox{at rate~2} \qquad \hbox{and} \qquad X_t \to \rho_+ X_t \quad \hbox{at rate~1}.
\end{equation}
 The analysis of this process shows that, when starting above~$2D$, the process converges to infinity while staying above~$D$ at all times, with positive probability~(see Lemma~\ref{lem:OST}).
 This, together with~\eqref{eq:timeone} and the stochastic domination of~\eqref{eq:submartingale}, implies that
\begin{equation}
\label{eq:divergence}
 P (\bar \xi_t (\zeta_t (e)) \to \infty) = p (\theta, \rho_-, \rho_+) > 0 \quad \hbox{for all} \quad e \in \Z + 1/2.
\end{equation}
 Since the initial distribution and the evolution rules~(the graphical representation of the opinion model) are translation invariant, it follows from~\eqref{eq:divergence} and the ergodic theorem that
 $$ \begin{array}{l} \card \{e \in \Z + 1/2 : \lim_{t \to \infty} \bar \xi_t (\zeta_t (e)) = \infty \} = \infty \quad \hbox{with probability one}. \end{array} $$
 Because the processes~$\zeta_t (e)$, $e \in \Z + 1/2$, might intersect and then coalesce, this shows that there is at least one edge~(not infinitely many) along which the gap diverges to infinity.
 The rest of this section is devoted to proving the technical details leading to~\eqref{eq:timeone}--\eqref{eq:divergence}. \\
\indent
 To prove~\eqref{eq:timeone}, the idea is to start with a large gap along~$e$ and small gaps along~$e \pm 1$, then impose enough interactions along~$e$ to increase the gap along~$e$, but no interactions along the four nearest edges to also make sure that the process~$\zeta_t (e)$ stays at edge~$e$ until time one.
\begin{lemma}
\label{lem:initial-gap}
 Assume that~$\theta < 2$.
 Then, for all~$e \in \Z + 1/2$,
 $$ P (\bar \xi_0 (e \pm 1) < \theta < \bar \xi_0 (e)) \geq (1/2 - \theta / 4)^2 (\theta/ 4)^2 > 0. $$
\end{lemma}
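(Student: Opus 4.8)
The plan is to reduce the statement to a computation about four independent uniform random variables and then exhibit an explicit favorable sub-event whose probability matches the stated lower bound exactly. Writing $e = x + 1/2$, the three edges $e - 1$, $e$, $e + 1$ involve the four consecutive vertices $x - 1, x, x + 1, x + 2$, so I set $A = \xi_0(x-1)$, $B = \xi_0(x)$, $C = \xi_0(x+1)$, $D = \xi_0(x+2)$, which are independent and uniform on $[-1,1]$. In this notation $\bar \xi_0(e-1) = |B - A|$, $\bar \xi_0(e) = |C - B|$, $\bar \xi_0(e+1) = |D - C|$, and the event in question reads $\{|B - A| < \theta\} \cap \{|C - B| > \theta\} \cap \{|D - C| < \theta\}$. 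The three events cannot be treated as independent because the middle gap shares the variables $B$ and $C$ with the two outer gaps; the whole point is to arrange the shared variables first and only then place the outer ones.

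First I would force the large middle gap by requiring $B \in [-1, -\theta/2]$ and $C \in [\theta/2, 1]$; on this region $C - B \geq \theta$, with equality only on a measure-zero set, so $|C - B| > \theta$ holds almost surely there. Next I would guarantee the two small outer gaps by placing $A$ and $D$ in short windows defined relative to $B$ and $C$: take $A \in [B, B + \theta/2]$ and $D \in [C - \theta/2, C]$. Because $B \leq -\theta/2$ and $C \geq \theta/2$, these inward-pointing windows stay inside $[-1,1]$ and hence have full length $\theta/2$; moreover they force $|B - A| \leq \theta/2 < \theta$ and $|D - C| \leq \theta/2 < \theta$. Thus this sub-event is contained, up to a null set, in the target event.

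To compute the probability I would use that the blocks $(A, B)$ and $(C, D)$ are independent, so the probability of the sub-event factors as the product of an $(A,B)$-probability and a $(C,D)$-probability. Conditioning on $B$ and using that the window for $A$ has length $\theta/2$ and lies in $[-1,1]$ for every admissible $B$, the $(A,B)$-factor equals $(1/2 - \theta/4)(\theta/4)$; the $(C,D)$-factor is identical by symmetry. Multiplying gives exactly $(1/2 - \theta/4)^2 (\theta/4)^2$, which is strictly positive precisely in the nontrivial regime $0 < \theta < 2$.

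The step I expect to require the most care is the placement of the outer windows, because the bound is tight and I cannot afford to lose any length from the window for $A$ or $D$ by letting it run past $\pm 1$. This is exactly why the windows point inward, toward the origin, and why the constraints $B \leq -\theta/2$ and $C \geq \theta/2$ are imposed: they simultaneously produce the large middle gap and keep the two length-$\theta/2$ windows entirely within $[-1,1]$, so that each conditional probability is exactly $\theta/4$ rather than something smaller. Everything else is a routine product-measure computation.
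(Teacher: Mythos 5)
Your proof is correct and follows essentially the same strategy as the paper's: force the middle gap above $\theta$ by placing the two inner opinions on opposite sides of $\pm\theta/2$, and force the outer gaps below $\theta$ by confining the outer opinions to inward-pointing windows of length $\theta/2$, which stay inside $[-1,1]$ and thus contribute exactly $\theta/4$ each. The only differences are cosmetic (your sign convention for the middle pair is mirrored, and you factor over the $(A,B)$ and $(C,D)$ blocks where the paper conditions), so the two arguments coincide.
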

\begin{proof}
 By inclusion of events, and recalling that the random variables~$\xi_0 (x)$, $x \in \Z$, are independent and uniformly distributed, we get the lower bound
 $$ \begin{array}{l}
     P (\bar \xi_0 (e \pm 1) < \theta < \bar \xi_0 (e)) \vspace*{4pt} \\ \hspace*{10pt} \geq
     P (\xi_0 (e - 1/2) > + \theta / 2, \,\xi_0 (e + 1/2) < - \theta / 2, \vspace*{4pt} \\ \hspace*{50pt}
     0 > \xi_0 (e - 3/2) - \xi_0 (e - 1/2) > - \theta / 2, \,0 < \xi_0 (e + 3/2) - \xi_0 (e + 1/2) < + \theta / 2) \vspace*{4pt} \\ \hspace*{10pt} =
     P (\xi_0 (e - 1/2) > + \theta / 2) \,P (\xi_0 (e + 1/2) < - \theta / 2) \vspace*{4pt} \\ \hspace*{22pt}
     P (0 > \xi_0 (e - 3/2) - \xi_0 (e - 1/2) > - \theta / 2 \,| \,\xi_0 (e - 1/2) > + \theta / 2)  \vspace*{4pt} \\ \hspace*{22pt}
     P (0 < \xi_0 (e + 3/2) - \xi_0 (e + 1/2) < + \theta / 2 \,| \,\xi_0 (e + 1/2) < - \theta / 2) =
     (1/2 - \theta / 4)^2 (\theta / 4)^2. \end{array} $$
 This completes the proof.
\end{proof}
\begin{lemma}
\label{lem:increase-gap}
 Assume that~$\theta < 2$.
 Then, for all~$e \in \Z + 1/2$,
 $$ P (\bar \xi_1 (\zeta_1 (e)) > 2D) \geq (1/2 - \theta / 4)^2 (\theta/ 4)^2 (e^{-5} / K!) > 0, $$
 where~$K = \lceil \log (2D / \theta) / \log (\rho_+) \rceil$.
\end{lemma}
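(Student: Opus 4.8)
The plan is to work on the favorable initial event supplied by Lemma~\ref{lem:initial-gap} and then prescribe a convenient pattern of interactions near~$e$ during the time interval~$[0, 1]$. Write~$e = (x, x + 1)$ and set~$a_0 = \xi_0 (x)$, $b_0 = \xi_0 (x + 1)$, so that~$d_0 = b_0 - a_0$ is the signed initial gap along~$e$, while~$c = \xi_0 (x + 2)$ and~$f = \xi_0 (x - 1)$ are the outer opinions. On the event~$\{\bar \xi_0 (e \pm 1) < \theta < \bar \xi_0 (e) \}$ of Lemma~\ref{lem:initial-gap} we have~$|d_0| > \theta$ while both neighboring gaps lie strictly below~$\theta$. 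I would then consider, on the graphical representation, the event that during~$[0, 1]$ there are exactly~$K$ interactions along~$e$ and no interactions at all along the four nearest edges~$e - 2, e - 1, e + 1, e + 2$. The goal is to show that, on the intersection of these two events, the tracking process stays put, $\zeta_1 (e) = e$, and the gap along~$e$ exceeds~$2D$ at time one.

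First I would control the growth of the gap along~$e$. Since~$|d_0| > \theta$, the first interaction along~$e$ is a repulsion, and repulsion multiplies the signed gap by~$\rho_+$; because~$\rho_+ > 1$ the gap stays above~$\theta$, so every subsequent interaction along~$e$ is again a repulsion. Hence after the~$k$-th such interaction the signed gap along~$e$ equals~$\rho_+^k d_0$, and after all~$K$ interactions the gap equals~$\rho_+^K |d_0| > \rho_+^K \theta$. By the choice~$K = \lceil \log (2D / \theta) / \log \rho_+ \rceil$ we have~$\rho_+^K \geq 2D / \theta$, whence~$\bar \xi_1 (e) > 2D$, as desired.

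The main point, and the step I expect to be the real obstacle, is to verify that~$\zeta_t (e)$ never leaves~$e$ during~$[0, 1]$. Since interactions along~$e \pm 1$ are forbidden, the process is triggered only by interactions along~$e$ itself, at which times it jumps to the edge carrying the largest gap among the trio~$e - 1, e, e + 1$. The difficulty is that each repulsion along~$e$ perturbs the shared endpoints~$x$ and~$x + 1$, and therefore also changes the gaps along~$e - 1$ and~$e + 1$. To control this I would track the endpoint opinions explicitly: forbidding interactions along~$e \pm 2$ keeps the outer opinions~$c$ and~$f$ fixed, while solving the repulsion recursion gives~$\xi_t (x + 1) = b_0 + \tfrac{1}{2} d_0 (\rho_+^k - 1)$ after~$k$ repulsions, and symmetrically~$\xi_t (x) = a_0 - \tfrac{1}{2} d_0 (\rho_+^k - 1)$. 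Consequently the gap along~$e + 1$ after~$k$ repulsions is at most~$|c - b_0| + \tfrac{1}{2} |d_0| (\rho_+^k - 1) < \theta + \tfrac{1}{2} |d_0| (\rho_+^k - 1)$. Comparing this with the gap~$\rho_+^k |d_0|$ along~$e$ and using~$|d_0| > \theta$ yields~$\rho_+^k |d_0| - \theta - \tfrac{1}{2} |d_0| (\rho_+^k - 1) = \tfrac{1}{2} |d_0| (\rho_+^k + 1) - \theta > 0$, so~$e$ strictly beats~$e + 1$; the symmetric bound using~$f$ handles~$e - 1$. Thus~$e$ is the unique maximizer of the trio after every repulsion, so the jump lands back on~$e$, and~$\zeta_t (e) = e$ throughout~$[0, 1]$.

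It then remains to assemble the probability. The initial configuration is independent of the Poisson clocks driving the interactions, and the clocks on distinct edges are independent rate-one Poisson processes, so the three requirements factor. The initial event contributes~$(1/2 - \theta / 4)^2 (\theta / 4)^2$ by Lemma~\ref{lem:initial-gap}; the event of exactly~$K$ interactions along~$e$ in unit time contributes~$e^{-1} / K!$; and the event of no interaction along each of the four edges~$e - 2, e - 1, e + 1, e + 2$ contributes~$e^{-4}$. Multiplying these gives the stated lower bound~$(1/2 - \theta / 4)^2 (\theta / 4)^2 (e^{-5} / K!)$, which is strictly positive since~$\theta < 2$ forces~$1/2 - \theta / 4 > 0$.
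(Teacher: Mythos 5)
Your proposal is correct and follows essentially the same route as the paper: the same event (the initial configuration from Lemma~\ref{lem:initial-gap}, exactly $K$ interactions along $e$, none along $e \pm 1, e \pm 2$), the same $\rho_+^K$ growth bound, and the same factorization $(1/2 - \theta/4)^2 (\theta/4)^2 \cdot e^{-5}/K!$. The only difference is that where the paper asserts as a bullet-point observation that the gap along $e$ remains the largest of the trio, you verify it explicitly by solving the repulsion recursion $\xi_t(x+1) = b_0 + \tfrac{1}{2} d_0 (\rho_+^k - 1)$ and comparing gaps, which is a welcome extra level of rigor but not a different argument.
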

\begin{proof}
 To begin with, observe that we have the following:
\begin{itemize}
\item As long as there are no interactions along the four edges~$e \pm 1$, $e \pm 2$, the opinions at the two vertices~$e - 3/2$ and~$e + 3/2$ remain the same. \vspace*{4pt}
\item As long as the gap along edge~$e$ exceeds the threshold~$\theta$, each interaction along that edge applies a~$\rho_+$ multiplier to the opinion distance~(repulsion). \vspace*{4pt}
\item As long as the gap along edge~$e$ exceeds both the threshold~$\theta$ and the gaps along edges~$e \pm 1$, interactions along edge~$e$ do not change this property.
\end{itemize}
 Letting~$I_e$ be the number of interactions along edge~$e$ before time one, and combining the three properties above, we obtain the inclusions of events
\begin{equation}
\label{eq:increase-gap-1}
\begin{array}{l}
\{\bar \xi_0 (e \pm 1) < \theta < \bar \xi_0 (e) \} \cap \{I_{e \pm 1} = I_{e \pm 2} = 0 \} \cap \{I_e = K \} \vspace*{4pt} \\ \hspace*{20pt}
\subset \{\bar \xi_0 (e \pm 1) < \theta < \bar \xi_0 (e) \} \cap \{\xi_t (e \pm 3/2) = \xi_0 (e \pm 3/2) \ \hbox{for all} \ t \leq 1 \} \cap \{I_e = K \} \vspace*{4pt} \\ \hspace*{20pt}
\subset \{\zeta_1 (e) = e \} \cap \{\bar \xi_1 (e) > \theta \rho_+^K \} \subset \{\bar \xi_1 (\zeta_1 (e)) > \theta \rho_+^K \}. \end{array}
\end{equation}
 Notice also that, in view of the expression of~$K$,
\begin{equation}
\label{eq:increase-gap-2}
\log (\rho_+^K) = K \log (\rho_+) \geq \log (2D / \theta) \quad \hbox{therefore} \quad \theta \rho_+^K \geq 2D.
\end{equation}
 In addition, because the five random variables~$I_e, I_{e \pm 1}, I_{e \pm 2}$ are independent Poisson random variables with parameter one, we also have
\begin{equation}
\label{eq:increase-gap-3}
  P (I_{e \pm 1} = I_{e \pm 2} = 0, \,I_e = K) = e^{-1} \cdot e^{-1} \cdot e^{-1} \cdot e^{-1} \cdot e^{-1} / K! = e^{-5} / K! > 0.
\end{equation}
 Combining~\eqref{eq:increase-gap-1}--\eqref{eq:increase-gap-3} and Lemma~\ref{lem:initial-gap}, we conclude that
 $$ \begin{array}{rcl}
      P (\bar \xi_1 (\zeta_1 (e)) > 2D) & \n \geq \n &
      P (\bar \xi_1 (\zeta_1 (e)) > \theta \rho_+^K) \vspace*{4pt} \\ & \n \geq \n &
      P (\bar \xi_0 (e \pm 1) < \theta < \bar \xi_0 (e)) \,P (I_{e \pm 1} = I_{e \pm 2} = 0, \,I_e = K) \vspace*{4pt} \\ & \n \geq \n &
      (1/2 - \theta / 4)^2 (\theta / 4)^2 (e^{-5} / K!) > 0, \end{array} $$
 which completes the proof of the lemma, and shows~\eqref{eq:timeone}.
\end{proof} \\ \\
 We now look at the dynamics of the gap when the gap is sufficiently large.
 The evolution rules of the opinion model imply that, when the gap at~$\zeta_t (e)$ exceeds the confidence threshold~$\theta$, interactions along~$\zeta_t (e)$ occur at rate one, and increase the gap by a factor~$\rho_+$.
 The gap at~$\zeta_t (e)$ can also change due to interactions along the edges~$\zeta_t (e) \pm 1$, which occur at rate two.
 In particular, to prove that, as long as the gap at~$\zeta_t (e)$ exceeds~$D$, it dominates the process~$X_t$ defined in~\eqref{eq:submartingale}, it suffices to prove that, for every edge~$e \in \Z + 1/2$, if there is an interaction along~$e \pm 1$ at time~$t$, then
\begin{equation}
\label{eq:control-gap}
\bar \xi_{t-} (e) > D \quad \hbox{implies that} \quad \max (\bar \xi_t (e - 1), \bar \xi_t (e), \bar \xi_t (e + 1)) \geq \rho_- \,\bar \xi_{t -} (e).
\end{equation}
 To prove~\eqref{eq:control-gap}, we start with the following two technical lemmas.
\begin{lemma}
\label{lem:D}
 For all~$g > D$, we have~$g - \mu_- \theta > \rho_- g$.
\end{lemma}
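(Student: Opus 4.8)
The plan is to recognize that the claimed inequality is, after one elementary rearrangement, nothing more than the hypothesis $g > D$ rewritten. First I would move all terms involving $g$ to one side of the target inequality $g - \mu_- \theta > \rho_- g$: subtracting $\rho_- g$ from both sides gives the equivalent form $(1 - \rho_-) \, g > \mu_- \theta$.

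Next I would use the fact, already recorded alongside the definition of $\rho_-$, that $\rho_- < 1$, so that $1 - \rho_- > 0$. Dividing both sides of $(1 - \rho_-) \, g > \mu_- \theta$ by this strictly positive quantity leaves the direction of the inequality unchanged and yields $g > \mu_- \theta / (1 - \rho_-)$. By the definition $D = \mu_- \theta / (1 - \rho_-)$, the right-hand side is exactly $D$, so the inequality to be proved reduces to $g > D$, which is precisely the hypothesis.

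The hard part, such as it is, is essentially nonexistent: the only substantive ingredients are the algebraic identity $D = \mu_- \theta / (1 - \rho_-)$, which is built directly into the definition of $D$, and the sign condition $\rho_- < 1$, which is what permits dividing without reversing the inequality. Since the single inequality manipulation used is reversible, this argument in fact shows that $g - \mu_- \theta > \rho_- g$ holds if and only if $g > D$, and the lemma follows immediately.
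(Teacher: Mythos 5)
Your proof is correct and is essentially the paper's own argument: both rest on the rearrangement $(1-\rho_-)g - \mu_-\theta$, the sign fact $\rho_- < 1$, and the identity $(1-\rho_-)D = \mu_-\theta$ built into the definition of $D$. The only cosmetic difference is that you reduce the conclusion to the hypothesis by reversible steps, whereas the paper argues forward from $g > D$; the content is identical.
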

\begin{proof}
 Using that~$\rho_- < 1$, we get
 $$ g - \mu_- \theta - \rho_- g = (1 - \rho_-) g - \mu_- \theta > (1 - \rho_-) D - \mu_- \theta = 0. $$
 This completes the proof.
\end{proof}
\begin{lemma}
\label{lem:align}
 Let~$a > b > c$ and set~$b' = b - \mu_+ (c - b)$ and~$c' = c - \mu_+ (b - c)$. Then,
\begin{equation}
\label{eq:align}
\frac{|a - b'| \vee |b' - c'|}{|a - b|} \geq \rho_-.
\end{equation}
\end{lemma}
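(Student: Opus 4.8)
The plan is to turn the statement into a one–variable inequality by computing the two candidate gaps explicitly. Write $g = a - b = |a - b|$ and $h = b - c > 0$ for the two gaps present before the interaction. The repulsion pushes $b$ up and $c$ down, so that $b' = b + \mu_+ h$ and $c' = c - \mu_+ h$. From these I would record the two post-interaction gaps $a - b' = g - \mu_+ h$ and $b' - c' = (1 + 2\mu_+) h$. Since $h > 0$, the latter is positive and equals $|b' - c'|$, so proving \eqref{eq:align} reduces to the purely algebraic claim that $\max(|g - \mu_+ h|, (1 + 2\mu_+) h) \geq \rho_- g$.

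The key observation is that the two quantities balance at a single threshold determined by the definition $\rho_- = (1 + 2\mu_+)/(1 + 3\mu_+)$. This gives the two identities $\rho_- / (1 + 2\mu_+) = 1/(1 + 3\mu_+)$ and $1 - \rho_- = \mu_+/(1 + 3\mu_+)$, which are exactly the algebraic facts that make the argument close. I would therefore split the proof on whether $h \geq g/(1 + 3\mu_+)$ or $h < g/(1 + 3\mu_+)$.

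In the first case, $(1 + 2\mu_+) h \geq (1 + 2\mu_+) g /(1 + 3\mu_+) = \rho_- g$, so the gap $|b' - c'|$ already attains the bound. In the second case, $\mu_+ h < \mu_+ g /(1 + 3\mu_+) = (1 - \rho_-) g$; in particular $\mu_+ h < g$, so $a - b' = g - \mu_+ h$ is positive and $|a - b'| = g - \mu_+ h > g - (1 - \rho_-) g = \rho_- g$. In either case the maximum is at least $\rho_- g = \rho_- |a - b|$, which is \eqref{eq:align}, and this is precisely the content needed to verify the repulsion subcase of the control estimate \eqref{eq:control-gap}.

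There is no genuine obstacle here beyond bookkeeping; the only point requiring care is the sign of $a - b'$, since the repulsion can in principle push $b'$ past $a$. This is exactly why I prefer to organize the case split around the threshold $g/(1 + 3\mu_+)$ rather than around the sign of $g - \mu_+ h$: whenever $b'$ overshoots $a$, that is $\mu_+ h > g$, we automatically have $h > g/\mu_+ \geq g/(1 + 3\mu_+)$, so we land in the first case and the bound on $|b' - c'|$ settles the inequality without ever needing to control $|a - b'|$ in that regime.
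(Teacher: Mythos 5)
Your proof is correct: with $g = a-b$, $h = b-c$, the identities $a - b' = g - \mu_+ h$ and $b' - c' = (1+2\mu_+)h$ are right, and both halves of your case split check out, including the sign issue (in your second case $\mu_+ h < (1-\rho_-)g < g$, so $|a-b'| = g - \mu_+ h > \rho_- g$). The route differs from the paper's in how the cases are organized. The paper first isolates the overshoot regime $(a-b) \leq \mu_+(b-c)$, where it shows $b'-c' \geq a-b$ so the ratio is at least $1$; in the complementary regime it treats $\phi_1(c) = (a-b')/(a-b)$ and $\phi_2(c) = (b'-c')/(a-b)$ as linear functions of $c$ with slopes of opposite signs, argues that the minimum of $\phi_1 \vee \phi_2$ is attained where the two lines cross, locates that crossing at $(a-b) = (1+3\mu_+)(b-c)$, and evaluates the common value there to be $\rho_-$. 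Your threshold $h = g/(1+3\mu_+)$ is exactly that crossing point, but you replace the variational step (minimizing a maximum) by a direct verification that on one side of the threshold the gap $(1+2\mu_+)h$ already exceeds $\rho_- g$ and on the other side $g - \mu_+ h$ does; moreover your first case swallows the overshoot regime automatically, since $\mu_+ h > g$ forces $h > g/\mu_+ > g/(1+3\mu_+)$, so you never need a separate case for it. What the paper's argument buys is the geometric picture: it identifies the extremal configuration (the three points aligned) and thereby shows that $\rho_-$ is the best possible constant in the lemma. What yours buys is economy: a purely algebraic two-case check with no optimization argument and one fewer case.
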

\begin{proof}
 Intuitively/geometrically,~$a$ and~$b$ being fixed, the left-hand side of~\eqref{eq:align} reaches its minimum when the three points~$(-1, a)$, $(0, b')$, and~$(1, c')$ are aligned.
 To prove the result analytically, we distinguish between the following two cases. \vspace*{5pt} \\
{\bf Case 1}. $(a - b) \leq \mu_+ (b - c)$.
 In this case,
 $$ b' - c' = (b - c) + 2 \mu_+ (b - c) \geq (b - c) + 2 (a - b) \geq a - b \geq 0, $$
 from which it follows that
 $$ \frac{|a - b'| \vee |b' - c'|}{|a - b|} \geq \frac{b' - c'}{a - b} \geq 1 \geq \rho_- \quad \hbox{for all} \quad \mu_+ > 0. $$
{\bf Case 2}. $(a - b) \geq \mu_+ (b - c)$.
 In this case,
 $$ \begin{array}{rcrcll}
    \phi_1 (c) & \n = \n & \displaystyle \frac{a - b'}{a - b}  & \n = \n & \displaystyle \frac{(a - b) - \mu_+ (b - c)}{a - b} \geq 0, \vspace*{8pt} \\
    \phi_2 (c) & \n = \n & \displaystyle \frac{b' - c'}{a - b} & \n = \n & \displaystyle \frac{(1 + 2 \mu_+)(b - c)}{a - b} \geq 0, \end{array} $$
 and, for every fixed~$a$,~$b$, and~$\mu_+$, the graphs of~$\phi_1$ and~$\phi_2$ are straight lines of strictly positive and strictly negative slope, respectively.
 In particular, the minimum of~$\phi_1 (c) \vee \phi_2 (c)$ is reached when~$\phi_1 (c) = \phi_2 (c)$, which gives the condition
 $$ \begin{array}{rcl}
      0 & \n = \n & (a - b') - (b' - c') = a - 2b' + c' = a - 2b - 2 \mu_+ (b - c) + c - \mu_+ (b - c) \vspace*{4pt} \\
        & \n = \n & (a - b) - (1 + 3 \mu_+)(b - c). \end{array} $$ 
 In addition, for~$c_0$ such that~$a - b = (1 + 3 \mu_+)(b - c_0)$,
 $$ \begin{array}{rcl}
    \phi_1 (c_0) \vee \phi_2 (c_0) & \n = \n & \phi_2 (c_0) = \displaystyle \frac{(1 + 2 \mu_+)(b - c_0)}{a - b} = \frac{(1 + 2 \mu_+)(b - c_0)}{(1 + 3 \mu_+)(b - c_0)} = \rho_-. \end{array} $$
 This completes the proof.
\end{proof} \\ \\
 Using the previous two lemmas, we can now prove the implication~\eqref{eq:control-gap}.
\begin{lemma}
\label{lem:control-gap}
 Assume that there is an interaction along~$e \pm 1$ at time~$t$.
 Then~\eqref{eq:control-gap} holds.
\end{lemma}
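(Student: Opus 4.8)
The plan is to reduce the statement to the two preparatory lemmas by a case analysis on the type and geometry of the interaction. By the spatial symmetry of the model (reflection about the midpoint of~$e$), I would assume without loss of generality that the interaction at time~$t$ is along~$e + 1$. Writing the three opinions at the vertices of~$e$ and~$e+1$ just before the interaction as $A = \xi_{t-}(e - 1/2)$, $B = \xi_{t-}(e + 1/2)$ and $C = \xi_{t-}(e + 3/2)$, I note that $\bar \xi_{t-}(e) = |A - B| > D$, that the interaction updates only~$B$ and~$C$, and that among the three edges appearing in~\eqref{eq:control-gap} the gap along~$e - 1$ is left unchanged. In particular $\xi_t(e - 1/2) = A$, the new gap along~$e$ equals $|A - \xi_t(e + 1/2)|$, and it suffices to bound the gaps along~$e$ and~$e + 1$ from below.

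I would then split on whether the interaction along~$e+1$ is an attraction or a repulsion. In the \emph{attraction} case, $|B - C| \le \theta$, the updated opinion $\xi_t(e+1/2) = B + \mu_-(C - B)$ differs from~$B$ by at most $\mu_- |C - B| \le \mu_- \theta$, so the new gap along~$e$ is at least $|A - B| - \mu_- \theta$. Since $|A - B| > D$, Lemma~\ref{lem:D} gives $|A - B| - \mu_- \theta > \rho_- |A - B|$, whence $\bar \xi_t(e) > \rho_- \bar \xi_{t-}(e)$ and~\eqref{eq:control-gap} follows at once.

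In the \emph{repulsion} case, $|B - C| > \theta$, the updated opinions are $B' = B - \mu_+(C - B)$ and $C' = C - \mu_+(B - C)$, and the new gaps along~$e$ and~$e + 1$ are $|A - B'|$ and $|B' - C'|$. Because $\bar \xi_{t-}(e) > D > 0$ and $|B - C| > \theta > 0$, both $A - B$ and $B - C$ are nonzero, and I would distinguish two subcases according to their signs. If they have opposite signs, then~$B$ is an extreme point among $A, B, C$, the repulsion moves $\xi_t(e+1/2) = B'$ away from~$A$, and a one-line computation gives $|A - B'| \ge |A - B|$; since $\rho_- < 1$ this already yields $\bar \xi_t(e) \ge \bar \xi_{t-}(e) > \rho_- \bar \xi_{t-}(e)$. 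If they have the same sign, then~$B$ lies strictly between~$A$ and~$C$; after applying the reflection $v \mapsto -v$ if needed to arrange $A > B > C$, this is precisely the configuration of Lemma~\ref{lem:align} with $a = A$, $b = B$, $c = C$, whose conclusion reads $\max(|A - B'|, |B' - C'|) \ge \rho_- |A - B|$, i.e.\ $\max(\bar \xi_t(e), \bar \xi_t(e+1)) \ge \rho_- \bar \xi_{t-}(e)$.

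The one genuinely delicate configuration is the same-sign repulsion subcase, where~$B$ sits between~$A$ and~$C$ and the repulsion can push $\xi_t(e+1/2)$ all the way toward~$A$, collapsing the gap along~$e$; this is exactly where Lemma~\ref{lem:align} does the work, guaranteeing that whatever the gap along~$e$ loses is recovered by the enlarged gap along~$e + 1$. Everything else is cheap: the attraction case is absorbed by the choice of~$D$ through Lemma~\ref{lem:D}, and the opposite-sign repulsion subcase only makes the gap along~$e$ grow. I therefore expect the only real work to be the sign bookkeeping needed to bring the dangerous configuration into the normalized form $a > b > c$ of Lemma~\ref{lem:align}.
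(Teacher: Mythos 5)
Your proposal is correct and follows essentially the same route as the paper's proof: the same reduction by symmetry to an interaction along $e+1$, the same identification of the dangerous configuration (repulsion with $B$ strictly between $A$ and $C$) handled by Lemma~\ref{lem:align}, and the same use of Lemma~\ref{lem:D} for the attraction case. The only cosmetic difference is bookkeeping: the paper splits into four cases according to the position of $c$ relative to $b$ (after assuming $a>b$), while you split attraction/repulsion first and absorb both attraction cases in one triangle-inequality bound, which is a minor streamlining of the same argument.
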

\begin{figure}[t!]
\centering
\scalebox{0.30}{\input{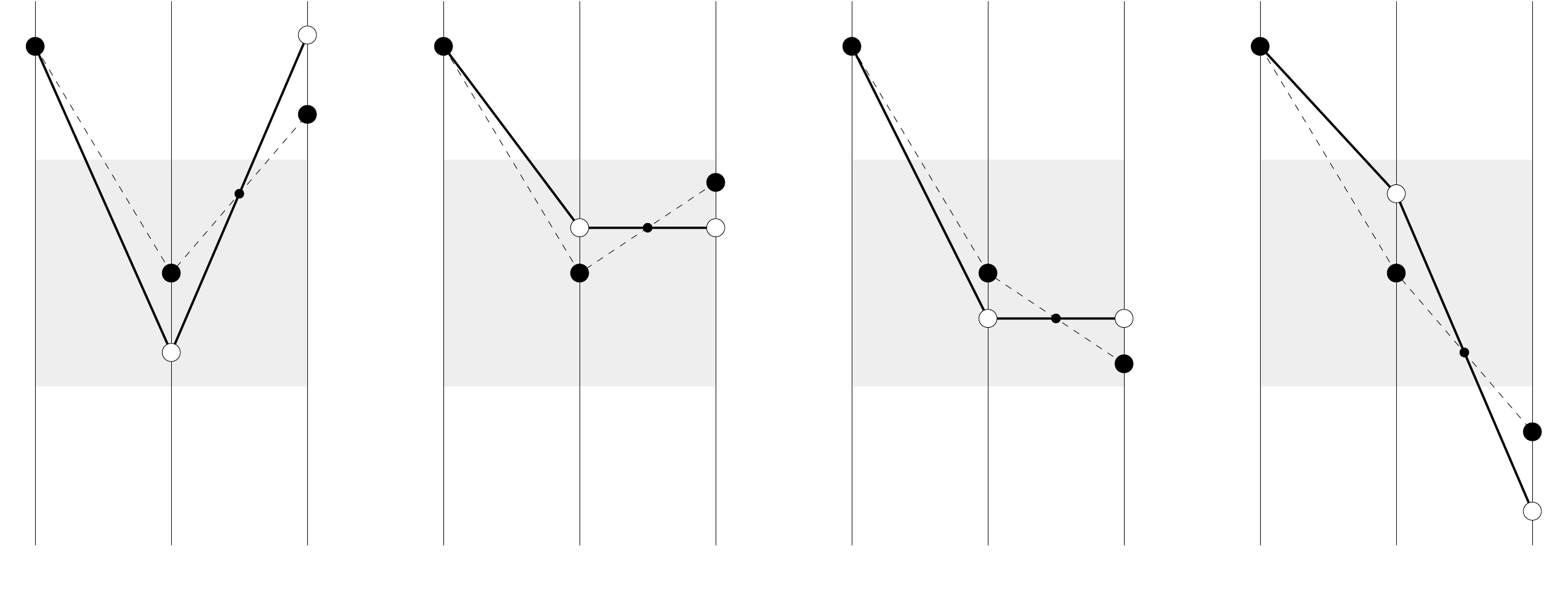_t}}
\caption{\upshape{
 Result of an interaction along~$e + 1$ connecting vertices with opinions~$b$ and~$c$ depending on whether~$c$ is smaller/larger than~$b$ and within the confidence threshold of~$b$ or not.
 The black dots represent the opinions before the interaction, the white dots the opinions after the interaction, and the gray area the confidence area of~$b$.}}
\label{fig:interaction}
\end{figure}
\begin{proof}
 By symmetry, we may assume that the interaction occurs along edge $e + 1$.
 To simplify the notations, and underline the connection with the previous lemma, we let
 $$ \xi_{t-} (e - 1/2) = a, \qquad \xi_{t-} (e + 1/2) = b, \qquad \xi_{t-} (e + 3/2) = c. $$
 By symmetry, we may also assume that~$a > b$.
 To prove the lemma, we distinguish among four cases depending on whether~$c$ is larger or smaller than~$b$, and depending on whether the opinion distance~$|b - c|$ is larger or smaller than the confidence threshold~$\theta$, as shown in Figure~\ref{fig:interaction}. \vspace*{5pt} \\
{\bf Case 1}. $c \in (b + \theta, \infty)$. In this case, there is repulsion so
 $$ \begin{array}{rcl}
    \bar \xi_t (e) & \n = \n & |\xi_{t-} (e - 1/2) - \xi_t (e + 1/2)| = |a - (b - \mu_+ (c - b))| \vspace*{4pt} \\
                   & \n = \n & |\bar \xi_{t-} (e) + \mu_+ (c - b)| = \bar \xi_{t-} (e) + \mu_+ (c - b) \geq \bar \xi_{t-} (e) \geq \rho_- \,\bar \xi_{t-} (e). \end{array} $$
{\bf Case 2}. $c \in [b, b + \theta]$. In this case, there is attraction. Using also that
 $$ \bar \xi_{t-} (e) > D > \mu_- \theta \geq \mu_- (c - b), $$
 and applying Lemma~\ref{lem:D}, we deduce that
 $$ \begin{array}{rcl}
    \bar \xi_t (e) & \n = \n & |\xi_{t-} (e - 1/2) - \xi_t (e + 1/2)| = |a - (b + \mu_- (c - b))| \vspace*{4pt} \\
                   & \n = \n & |\bar \xi_{t-} (e) - \mu_- (c - b)| = \bar \xi_{t-} (e) - \mu_- (c - b) \geq \bar \xi_{t-} (e) - \mu_- \theta \geq \rho_- \,\bar \xi_{t-} (e). \end{array} $$
{\bf Case 3}. $c \in [b - \theta, b]$. In this case, there is attraction so
$$ \begin{array}{rcl}
   \bar \xi_t (e) & \n = \n & |\xi_{t-} (e - 1/2) - \xi_t (e + 1/2)| = |a - (b + \mu_- (c - b))| \vspace*{4pt} \\
                  & \n = \n & |\bar \xi_{t-} (e) - \mu_- (c - b)| = \bar \xi_{t-} (e) + \mu_- (b - c) \geq \bar \xi_{t-} (e) \geq \rho_- \,\bar \xi_{t-} (e). \end{array} $$
{\bf Case 4}. $c \in (- \infty, b - \theta)$. In this case, there is repulsion. Using also that~$a > b > c$, and recalling the expression of~$b'$ and~$c'$  from Lemma~\ref{lem:align}, it follows from this lemma that
$$ \begin{array}{rcl}
   \max (\bar \xi_t (e), \bar \xi_t (e + 1)) & \n = \n & |\xi_{t-} (e - 1/2) - \xi_t (e + 1/2)| \vee |\xi_{t-} (e + 1/2) - \xi_t (e + 3/2)| \vspace*{4pt} \\
                                             & \n = \n & |a - b'| \vee |b' - c'| \geq \rho_- |a - b| = \rho_- \,\bar \xi_{t-} (e). \end{array} $$
 In all four cases, the implication~\eqref{eq:control-gap} holds, which proves the lemma.
\end{proof} \\ \\
 It follows from Lemma~\ref{lem:control-gap} and the discussion preceding Lemma~\ref{lem:D} that, as long as the gap at~$\zeta_t (e)$ at time~$t$ exceeds~$D$, this gap dominates stochastically the process~$X_t$ defined in~\eqref{eq:submartingale}.
 To complete the proof of the theorem, the last step is to study~$X_t$, showing in particular that, with positive probability when starting above~$2D$, the process converges to infinity without ever going below~$D$, in which case the stochastic domination would fail.
\begin{lemma}
\label{lem:supermartingale}
 There exists~$0 < c_0 < 1$ such that~$Y_t = c_0^{\log (X_t)}$ is a supermartingale.
\end{lemma}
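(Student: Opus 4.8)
The plan is to exploit the multiplicative structure of $X_t$. Since each transition multiplies $X_t$ by either $\rho_-$ (at rate~2) or $\rho_+$ (at rate~1), the process $\log X_t$ is a continuous-time random walk that jumps by $\log \rho_- < 0$ at rate~2 and by $\log \rho_+ > 0$ at rate~1. Writing $Y_t = c_0^{\log X_t}$, a jump of $X_t$ by the factor $\rho_\pm$ multiplies $Y_t$ by $c_0^{\log \rho_\pm}$, so $Y_t$ is itself a pure-jump Markov process whose infinitesimal generator acts as
$$ \mathcal{L} Y_t = Y_t \left[ 2 \left( c_0^{\log \rho_-} - 1 \right) + \left( c_0^{\log \rho_+} - 1 \right) \right]. $$
Because $Y_t > 0$, the supermartingale property $\mathcal{L} Y_t \leq 0$ is equivalent to the single scalar inequality
$$ h (c_0) := 2 \, c_0^{\log \rho_-} + c_0^{\log \rho_+} \leq 3. $$
So the whole lemma reduces to producing one value $c_0 \in (0, 1)$ satisfying this inequality, after which a routine integrability check (the numbers of each type of jump before any fixed time are independent Poisson variables, so $Y_t = c_0^{\log X_0} (c_0^{\log \rho_-})^{N_-(t)} (c_0^{\log \rho_+})^{N_+(t)}$ has finite expectation) upgrades the generator computation to the genuine supermartingale statement.

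To find such a $c_0$, I would study $h$ near the boundary value $c_0 = 1$. One checks immediately that $h (1) = 2 + 1 = 3$, so it suffices to show that $h$ is strictly increasing at $c_0 = 1$: then $h (c_0) < 3$ for all $c_0$ slightly below~1, and any such $c_0$ works. Differentiating,
$$ h' (c_0) = 2 \log \rho_- \, c_0^{\log \rho_- - 1} + \log \rho_+ \, c_0^{\log \rho_+ - 1}, \qquad h' (1) = 2 \log \rho_- + \log \rho_+. $$
Thus everything comes down to the sign of $2 \log \rho_- + \log \rho_+ = \log (\rho_-^2 \rho_+)$, which is precisely the mean logarithmic drift of $X_t$ per unit time.

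The main (and really the only) point to verify is therefore that $\rho_-^2 \rho_+ > 1$. Substituting the definitions $\rho_- = (1 + 2 \mu_+)/(1 + 3 \mu_+)$ and $\rho_+ = 1 + 2 \mu_+$, this becomes $(1 + 2 \mu_+)^3 > (1 + 3 \mu_+)^2$, which after expansion reduces to the manifestly true $3 \mu_+^2 + 8 \mu_+^3 > 0$, valid for every $\mu_+ > 0$. Hence $h' (1) > 0$, so by continuity $h (c_0) \leq 3$ for $c_0 < 1$ close enough to~1, and any such $c_0$ makes $Y_t = c_0^{\log X_t}$ a supermartingale. I expect this sign computation to be the crux: it encodes the fact that, although the two neighbor-interactions (total rate~2) each shrink the gap only by the modest factor $\rho_-$ close to~1, the self-interaction (rate~1) expands it by the larger factor $\rho_+$, and on the logarithmic scale the expansion wins. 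This is exactly the positive drift of $\log X_t$ that one would expect to drive $X_t \to \infty$, and the decreasing transform $Y_t$ being a supermartingale is what sets up the gambler's-ruin estimate of the following lemma.
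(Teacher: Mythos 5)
Your proposal is correct and follows essentially the same route as the paper: the same generator computation reducing the supermartingale property to $2c_0^{\log \rho_-} + c_0^{\log \rho_+} \leq 3$, the same choice of $c_0$ slightly below $1$ justified by the sign of the derivative at $1$, and the same algebraic verification that $\rho_-^2 \rho_+ = (1+2\mu_+)^3/(1+3\mu_+)^2 > 1$. Your added remark on integrability (via the Poisson jump counts) is a small refinement the paper leaves implicit, but it does not change the argument.
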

\begin{proof}
 Define~$\phi (c) = 2c^{\log (\rho_-)} + c^{\log (\rho_+)} - 3$.
 From
 $$ \begin{array}{rcl}
    \rho_-^2 \rho_+ & \n = \n &
    \displaystyle \bigg(\frac{1 + 2 \mu_+}{1 + 3 \mu_+} \bigg)^2 (1 + 2 \mu_+) =
    \displaystyle \frac{(1 + 2 \mu_+)^3}{(1 + 3 \mu_+)^2} =
    \displaystyle \frac{1 + 6 \mu_+ + 12 \mu_+^2 + 8 \mu_+^3}{1 + 6 \mu_+ + 9 \mu_+^2} > 1, \end{array} $$
 it follows that~$\phi$ is increasing at point one:
 $$ \phi' (1) = 2 \log (\rho_-) + \log (\rho_+) = \log (\rho_-^2 \rho_+) > \log (1) = 0. $$
 Observing also that~$\phi (1) = 0$, we deduce that
 $$ \phi (c_0) < 0 \quad \hbox{for some} \quad 0 < c_0 < 1 $$
 fixed from now on.
 In particular,
 $$ \begin{array}{rcl}
    \lim_{\ep \to 0} \ep^{-1} E (Y_{t + \ep} - Y_t \,| \,X_t) & \n = \n &
      2 (c_0^{\log (X_t) + \log (\rho_-)} - c_0^{\log (X_t)}) + (c_0^{\log (X_t) + \log (\rho_+)} - c_0^{\log (X_t)}) \vspace*{4pt} \\ & \n = \n &
      Y_t \,(2 c_0^{\log (\rho_-)} + c_0^{\log (\rho_+)} - 3) = Y_t \,\phi (c_0) \leq 0, \end{array} $$
 showing that the process~$Y_t$ is a supermartingale.
\end{proof}
\begin{lemma}
\label{lem:OST}
 Assume that~$X_0 > 2D$. Then,
 $$ \begin{array}{l} P (\lim_{t \to \infty} X_t = \infty \ \hbox{and} \ X_t > D \ \hbox{for all} \ t) \geq 1 - c_0^{\log (2)} > 0. \end{array} $$
\end{lemma}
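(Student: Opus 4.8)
The plan is to reduce the statement to a first-passage estimate for the supermartingale produced in Lemma~\ref{lem:supermartingale}. First I would pass to the logarithm and set $S_t = \log(X_t)$. By the rules~\eqref{eq:submartingale}, $S_t = N^-_t \log(\rho_-) + N^+_t \log(\rho_+)$, where $N^-_t$ and $N^+_t$ are independent Poisson processes with rates two and one counting the two types of jumps. The strong law of large numbers for Poisson processes gives $N^-_t / t \to 2$ and $N^+_t / t \to 1$ almost surely, so $S_t / t \to 2 \log(\rho_-) + \log(\rho_+) = \log(\rho_-^2 \rho_+) > 0$, the positivity being exactly the inequality $\rho_-^2 \rho_+ > 1$ already checked in the proof of Lemma~\ref{lem:supermartingale}. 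Hence $S_t \to \infty$, that is $X_t \to \infty$, almost surely, so the event $\{\lim_{t \to \infty} X_t = \infty \}$ has probability one and the only content left to prove is the lower bound on the probability that $X_t$ never drops to $D$ or below.

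For that I would exploit that, since $0 < c_0 < 1$, the map $x \mapsto c_0^{\log(x)}$ is strictly decreasing, so $X_t \leq D$ is equivalent to $Y_t \geq c_0^{\log(D)}$. Introduce the stopping time $\tau = \inf \{t \geq 0 : X_t \leq D \}$. Since $Y_t = c_0^{\log(X_t)}$ is a nonnegative supermartingale by Lemma~\ref{lem:supermartingale}, optional stopping at the bounded time $\tau \wedge t$ gives $E(Y_{\tau \wedge t}) \leq Y_0 = c_0^{\log(X_0)}$. On the event $\{\tau \leq t \}$ we have $X_\tau \leq D$, hence $Y_\tau \geq c_0^{\log(D)}$, so that $c_0^{\log(D)} \, P(\tau \leq t) \leq E(Y_{\tau \wedge t}) \leq c_0^{\log(X_0)}$. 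Rearranging yields $P(\tau \leq t) \leq c_0^{\log(X_0 / D)}$, and letting $t \to \infty$ gives $P(\tau < \infty) \leq c_0^{\log(X_0 / D)}$.

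To conclude I would plug in the hypothesis $X_0 > 2D$: then $\log(X_0 / D) > \log(2)$, and since $s \mapsto c_0^s$ is decreasing we get $c_0^{\log(X_0 / D)} \leq c_0^{\log(2)}$. Because $\{\tau = \infty \} = \{X_t > D \ \hbox{for all} \ t \}$, this gives $P(X_t > D \ \hbox{for all} \ t) = 1 - P(\tau < \infty) \geq 1 - c_0^{\log(2)}$. Intersecting with the almost sure event $\{X_t \to \infty \}$ costs no probability, which yields the claimed bound; positivity is immediate since $\log(2) > 0$ and $c_0 < 1$ force $c_0^{\log(2)} < 1$.

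The step requiring the most care is the application of optional stopping. Because $X_t$ is a pure jump process, at time $\tau$ it can overshoot strictly below $D$, but this only makes $Y_\tau$ larger than $c_0^{\log(D)}$, so the bound $Y_\tau \geq c_0^{\log(D)}$, and hence the whole estimate, remains valid; the benign direction of this overshoot is precisely what one must check. Beyond that, one should keep straight that the monotone relationship between $X_t$ and $Y_t$ is used with the correct orientation, and note that passing $t \to \infty$ is justified by monotone convergence of $P(\tau \leq t)$ to $P(\tau < \infty)$. Everything else is routine bookkeeping around the already-established supermartingale property.
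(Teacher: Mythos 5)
Your proof is correct, but it takes a route that differs from the paper's in how the two halves of the event are handled. Both arguments rest on the supermartingale $Y_t = c_0^{\log (X_t)}$ of Lemma~\ref{lem:supermartingale} together with optional stopping, but the paper works with the two-sided exit time $T_N = \inf \{t : X_t \notin (D, N) \}$, derives $P (X_{T_N} \geq N) \geq 1 - c_0^{\log (2)}$ for every $N > 2D$, and lets $N \to \infty$, so that escape to infinity and staying above $D$ are extracted simultaneously from the family of exit events. You instead decouple them: optional stopping at the bounded time $\tau \wedge t$, where $\tau = \inf \{t : X_t \leq D \}$, yields $P (\tau < \infty) \leq c_0^{\log (X_0 / D)} \leq c_0^{\log (2)}$, while the divergence $X_t \to \infty$ is obtained separately, with probability one, from the strong law of large numbers applied to $\log (X_t) = \log (X_0) + N_t^- \log (\rho_-) + N_t^+ \log (\rho_+)$, whose drift $2 \log (\rho_-) + \log (\rho_+) = \log (\rho_-^2 \rho_+)$ is positive by the computation already made in Lemma~\ref{lem:supermartingale}. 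Your route buys the stronger conclusion that $X_t \to \infty$ almost surely (so intersecting with $\{X_t > D \ \hbox{for all} \ t \}$ costs nothing), and it spares the reader the small deduction, left implicit in the paper, that clearing every level $N$ before dropping to $D$ forces convergence to infinity; the paper's route stays entirely within martingale theory and never invokes the Poisson representation of the jump chain. Your attention to the overshoot at $\tau$ (a jump can land strictly below $D$, which only pushes $Y_\tau$ above $c_0^{\log (D)}$, the useful direction) is exactly the right thing to check. Two harmless slips: your display for $S_t$ omits the initial term $\log (X_0)$, and in the application $X_0 = \bar \xi_1 (\zeta_1 (e))$ is random, so your bound should be read conditionally on $X_0$, or with $Y_0$ replaced by $E (Y_0) \leq c_0^{\log (2D)}$ as the paper does.
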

\begin{proof}
 For all~$N > 2D$, define the stopping time
 $$ T_N = \inf \{t : X_t \notin (D, N) \}. $$
 By the optional stopping theorem applied to the process~$Y_t$ stopped at time~$T_N$,
\begin{equation}
\label{eq:OST1} 
 E (Y_{T_N}) \leq E (Y_0) = E (c_0^{\log (X_0)}) \leq c_0^{\log (2D)}.
\end{equation}
 Observe also that we have the lower bound
\begin{equation}
\label{eq:OST2}
\begin{array}{rcl}
  E (Y_{T_N}) & \n = \n & E (Y_{T_N} \,| \,X_{T_N} \leq D) P (X_{T_N} \leq D) + E (Y_{T_N} \,| \,X_{T_N} \geq N) P (X_{T_N} \geq N) \vspace*{4pt} \\
              & \n \geq \n & E (Y_{T_N} \,| \,X_{T_N} \leq D) (1 - P (X_{T_N} \geq N)) \vspace*{4pt} \\
              & \n \geq \n & c_0^{\log (D)} (1 - P (X_{T_N} \geq N)). \end{array}
\end{equation}
 Combining~\eqref{eq:OST1} and~\eqref{eq:OST2}, we conclude that
 $$ P (X_{T_N} \geq N) \geq 1 - c_0^{\log (2D)} / c_0^{\log (D)} = 1 - c_0^{\log (2D) - \log (D)} = 1 - c_0^{\log (2)} > 0. $$ 
 Because the inequality holds for all~$N > 2D$, the lemma follows.
\end{proof} \\ \\
 Using Lemmas~\ref{lem:increase-gap} and~\ref{lem:OST}, and that, according to Lemma~\ref{lem:control-gap}, the gap at~$\zeta_t (e)$ at time~$t$ dominates stochastically the process~$X_t$ as long as it exceeds~$D$, we conclude that, for all edges~$e$,
 $$ \begin{array}{rcl}
 P (\bar \xi_t (\zeta_t (e)) \to \infty) & \n \geq \n &
 P (\bar \xi_t (\zeta_t (e)) \to \infty \,| \,\bar \xi_1 (\zeta_1 (e)) > 2D) \,P (\bar \xi_1 (\zeta_1 (e)) > 2D) \vspace*{4pt} \\ & \n \geq \n &
 P (X_t \to \infty \ \hbox{and} \ X_t > D \ \hbox{for all} \ t \,| \,X_0 > 2D) \,P (\bar \xi_1 (\zeta_1 (e)) > 2D) \vspace*{4pt} \\ & \n \geq \n &
 (1/2 - \theta / 4)^2 (\theta/ 4)^2 (e^{-5} / K!)(1 - c_0^{\log (2)}) > 0. \end{array} $$
 This shows that~\eqref{eq:divergence} holds.
 As previously explained, this and the ergodic theorem imply the existence of infinitely many edges~$e$ such that~$\bar \xi_t (\zeta_t (e)) \to \infty$, which proves the theorem.


\bibliographystyle{plain}
\bibliography{biblio.bib}

\end{document}